\newcommand{\IZ}{{\mathbf Z}}
\newcommand{\iip}[1]{\mathopen{\langle\!\langle}#1\mathclose{\rangle\!\rangle}}
\newcommand{\wt}{w}
\DeclareMathOperator{\spann}{span}
\DeclareMathOperator{\GL}{GL}
\DeclareMathOperator{\EL}{E}
\newtheorem{thm}{Theorem}
\newtheorem{cor}[thm]{Corollary}
\newtheorem{lem}[thm]{Lemma}
\theoremstyle{definition}
\newtheorem*{prob}{Problem}
\newtheorem*{rem}{Remark}
\title{Is an irng singly generated as an ideal?}
\author{Nicolas Monod}
\address{Nicolas Monod, EPFL, 1015 Lausanne, Switzerland}
\email{nicolas.monod@epfl.ch}
\author{Narutaka Ozawa}
\address{Narutaka Ozawa, RIMS, Kyoto University, \mbox{606-8502}, Japan}
\email{narutaka@kurims.kyoto-u.ac.jp}
\author{Andreas Thom}
\address{Andreas Thom, Univ.\ Leipzig,
PF 100920, 04009 Leipzig , Germany}
\email{andreas.thom@math.uni-leipzig.de}
\thanks{N.M. was partially supported by the SNF and the ERC. N.O. was partially supported by JSPS and Sumitomo Foundation. A.T. was supported by the ERC Starting Grant 277728.}
\subjclass{16A99; 20F05}
\keywords{Wiegold problem, idempotent rng, irng}
\begin{document}
\begin{abstract}
Recall that a rng is a ring which is possibly non-unital.
In this note, we address the problem whether every finitely generated
idempotent rng (abbreviated as irng) is singly generated as an ideal.
It is well-known that it is the case for a commutative irng. We prove here
it is also the case for a free rng on finitely many idempotents and for
a finite irng. A relation to the Wiegold problem for perfect groups is discussed.
\end{abstract}
\maketitle
\section{Introduction}
The Wiegold problem (FP14 in~\cite{openproblems}) is a longstanding
problem in group theory.
It asks whether every finitely generated perfect group $G$ (i.e., $G=[G,G]$)
is singly generated as a normal subgroup.
(We note that it makes no difference if one replaces ``finitely generated''
with ``finitely presented'' in the statement.)
Wiegold answered it positively in the finite group
case, see~\cite[4.2]{Lennox-Wiegold} for a stronger fact.
In this paper, we address a similar problem for rngs.
Recall that a \emph{rng} (a.k.a.\ a pseudo-ring) is an algebraic structure
satisfying the same properties as a ring, except that it may not have
a multiplicative unit. Every rng $R$ is an ideal of a ring, say,
the unitization $R^+$ of $R$.
We call a rng $R$ an \emph{irng} (a shorthand for an idempotent rng)
if it satisfies $R^2=R$.
Here $R^2=\spann\{ xy : x,y\in R\}$.
The \emph{weight} of a rng $R$ is defined as
\[
\wt(R)=\min\{ n : \mbox{$R$ is generated by $n$ elements as an ideal}\}.
\]
Obviously any unital rng is an irng with weight one.
We also notice that $\wt(R)$ is at least the minimal number of generators
of the additive group $R/R^2$.
The direct sum $R=\bigoplus\IZ$ of infinitely many copies
of $\IZ$ is an example of irng with $\wt(R)=+\infty$, but not finitely generated.
In this paper, we address the following.

\begin{prob}
Find a finitely generated irng whose weight is larger than one;
more generally, whose weight is larger than any prescribed positive integer.
\end{prob}

Even without the finite generation assumption, there seems no known example of
irng $R$ with $1<\wt(R)<+\infty$.
For a subset $Z$ in a rng $R$, we denote by $\iip{Z}$ the ideal generated by $Z$.
We note that if an irng $R$ satisfies $R=\iip{Z}$, i.e., $R=\spann(Z+RZ+ZR+RZR)$,
then because $R^3=R$ it actually satisfies $R=\spann RZR$.
\section{Irngs with weight one}
The following result is well-known, but we include the proof for the reader's convenience.
See Theorem~76 in~\cite{kaplansky} for more general fact.
\begin{thm}
Every finitely generated commutative irng $R$ has a unit and hence $\wt(R)=1$.
\end{thm}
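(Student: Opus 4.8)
The plan is to exhibit $R$ as a finitely generated ideal inside its unitization $R^+$ and then run the determinant trick (the Cayley--Hamilton form of Nakayama's lemma), which produces a multiplicative identity for $R$ directly.

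Concretely, let $x_1,\dots,x_n$ generate $R$ as a rng. The first step is to use idempotence to extract a convenient system of relations. Peeling off the last variable of a monomial in the $x_j$, every product of two elements of $R$ lies in $\sum_{j=1}^n Rx_j$, so $R^2=\sum_{j=1}^n Rx_j$; since $R=R^2$ this gives $R=\sum_j Rx_j$, and in particular each generator can be written $x_i=\sum_{j}a_{ij}x_j$ with $a_{ij}\in R$. Setting $A=(a_{ij})\in M_n(R^+)$ and viewing $\mathbf x=(x_1,\dots,x_n)$ as a column, this reads $(I_n-A)\mathbf x=0$.

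The second step is Cramer's rule over the commutative unital ring $R^+$: multiplying $(I_n-A)\mathbf x=0$ on the left by the adjugate of $I_n-A$ yields $\det(I_n-A)\,x_j=0$ for every $j$. Expanding the determinant, every term except the product of the diagonal ones contains some entry $a_{ij}\in R$, and $R$ is an ideal of $R^+$, so $\det(I_n-A)=1+b$ for some $b\in R$. Hence $(1+b)x_j=0$ for all $j$. Since the $x_j$ generate $R$ as a rng and $1+b$ annihilates each of them, it annihilates every monomial in the $x_j$ (peel off the first variable), hence all of $R$. Therefore $e:=-b\in R$ satisfies $er=r$ for all $r\in R$; in particular $e^2=e$, so $e$ is a unit of $R$. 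Finally $\iip{e}\subseteq R$ because $e\in R$, while $r=er$ shows $R\subseteq\iip{e}$, so $R=\iip{e}$ and $\wt(R)=1$.

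I do not expect a real obstacle here: commutativity is precisely what makes Cramer's rule available, and no finiteness or Noetherianity hypothesis on $R^+$ is required. The two points that need a little care are (i) establishing $R=\sum_j Rx_j$, which is where $R^2=R$ enters and is what forces $A$ to have entries in $R$ rather than merely in $R^+$, so that the correction term $b$ lands in $R$; and (ii) propagating annihilation from the generators $x_j$ to all of $R$, which uses that the $x_j$ generate $R$ as a rng, not merely as an ideal. Alternatively, steps two and three may be replaced by citing the standard fact that a finitely generated idempotent ideal in a commutative unital ring is generated by an idempotent, applied to the ideal $R$ of $R^+$.
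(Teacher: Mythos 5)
Your proposal is correct and is essentially the paper's own argument: express each generator via $R=R^2=\sum_j Rx_j$ to get $(I-A)\mathbf{x}=0$ with $A\in M_n(R)$, apply the adjugate/Cramer trick in $R^+$, and read off a multiplicative identity $e\in R$ from $\det(I-A)=1+b$. The only difference is that you spell out two details the paper leaves implicit (why $A$ has entries in $R$, and why annihilating the generators annihilates all of $R$), which is fine.
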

\begin{proof}
Let $x_1,\ldots,x_n$ be generators and $\pmb{x}=[x_1,\ldots,x_n]^T\in M_{n,1}(R)$.
Since $R^2=R$, there is $A\in M_n(R)$ such that $A\pmb{x}=\pmb{x}$.
Let $d$ be the determinant of $I-A$, considered in the unitization $R^+$.
Then, $z=1-d \in R$. By Cramer's formula, one has
\[
\mathrm{diag}(d,\ldots,d)\,\pmb{x}=(I-A)^{\sim}\,(I-A)\pmb{x}=\pmb{0},
\]
where $(I-A)^{\sim}$ is the adjugate matrix (the transpose of the matrix of cofactors).
This means that $zx_i=x_i$ for all $i$, and $z$ is a unit for $R$.
\end{proof}

\begin{thm}\label{main}
Let $R$ be a rng generated by $\{x_1,\ldots,x_n\}$ as an ideal and assume that
for every $i$ there is $u_i\in\iip{x_i, \ldots, x_n}$ such that either $x_i=x_iu_i$ or $x_i=u_ix_i$.
Then, $\wt(R)=1$.
\end{thm}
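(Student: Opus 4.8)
The plan is to prove, by induction on $n$, the slightly stronger statement: if $x_{1},\ldots,x_{n}$ lie in a rng $R$ and for every $i$ there is $u_{i}\in\iip{x_{i},\ldots,x_{n}}$ with $x_{i}\in\{x_{i}u_{i},\,u_{i}x_{i}\}$, then there are $c,w\in I:=\iip{x_{1},\ldots,x_{n}}$ with $I=\iip c$ and $c\in\{cw,\,wc\}$; taking $I=R$ then yields $\wt(R)=1$. The reinforced conclusion, namely that the single ideal generator can again be chosen of the ``almost unital'' shape of the hypothesis, is exactly what lets the induction run. For $n=1$ this is the hypothesis, with $c=x_{1}$ and $w=u_{1}$.

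For the inductive step one first applies the statement to $(x_{2},\ldots,x_{n})$, which satisfies the same hypothesis since nothing about the indices $i\ge 2$ is touched, obtaining $z,w_{z}\in\iip{x_{2},\ldots,x_{n}}$ with $\iip{x_{2},\ldots,x_{n}}=\iip z$ and $z\in\{zw_{z},\,w_{z}z\}$. Since $I=\iip{x_{1}}+\iip{x_{2},\ldots,x_{n}}=\iip{x_{1},z}$, everything reduces to the two-generator case: one is given $a:=x_{1}$ and $b:=z$, an element $u\in I=\iip{a,b}$ with $a\in\{au,\,ua\}$, and an element $v\in\iip b$ with $b\in\{bv,\,vb\}$, and one must produce $c,w\in I$ with $I=\iip c$ and $c\in\{cw,\,wc\}$.

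The device for merging two generators is the corrected unit $c:=u+v-uv$, characterised in the unitisation $R^{+}$ by $1-c=(1-u)(1-v)$. When the two relations sit on opposite sides, say $a=au$ and $b=vb$ (that is, $a(1-u)=0$ and $(1-v)b=0$ in $R^{+}$), one computes $a(1-c)=a(1-u)(1-v)=0$ and $(1-c)b=(1-u)(1-v)b=0$, hence $a=ac\in Rc\subseteq\iip c$ and $b=cb\in cR\subseteq\iip c$, so that $\iip a+\iip b\subseteq\iip c$ and therefore $\iip c=\iip{a,b}=I$. The mirror case $a=ua$, $b=bv$ is handled symmetrically by $c:=v+u-vu$. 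Since passing to the opposite rng reverses ``left'' and ``right'' simultaneously in both relations, the only remaining configuration is that the two relations lie on the same side, say $a=au$ and $b=bv$.

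I expect this same-side configuration to be the main obstacle. The corrected unit $c=u+v-uv$ still gives $ac=a$ and hence $\iip a\subseteq\iip c$; but if $u=p+q$ with $p\in\iip a$ and $q\in\iip b$, all one then extracts is $q+v-qv=c-(p-pv)\in\iip c$, which merely reduces the missing containment $b\in\iip c$ --- equivalently $v\in\iip c$, since $b=bv$ with $v\in\iip b$ forces $\iip v=\iip b$ --- to the assertion $q-qv\in\iip c$, and I do not see how to settle this directly. The line I would pursue is to use the slack in the inductive step: here $\iip b=\iip z$ is an idempotent ideal (from $b=bv$ with $v\in\iip b$ one gets $b\in\iip b^{\,2}$), so one may hope to replace $z$ by another generator of $\iip z$ whose almost-unital relation lies on the opposite side, reverting to a cross case, or else to argue directly inside $\iip b$. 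Finally, in every case one must check that the $c$ produced is itself of the form $cw$ or $wc$ for some $w\in I$, so that the induction continues; for the cross cases this ought to follow from one further application of the corrected-unit construction, and it is this bookkeeping that forces one to carry the strengthened inductive hypothesis.
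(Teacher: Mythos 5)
Your reduction to two generators stalls exactly where you say it does, and the same-side configuration is not a marginal case: it already occurs when every relation reads $x_i=x_iu_i$ (e.g.\ the free rng on idempotents with $u_i=x_i$), so the argument as written does not prove the theorem. The paper's proof shows why your framing creates the obstacle: it never tries to manufacture one element $c$ with $ac=a$ \emph{and} $bc=b$ (or the mirror images). Instead it sets $w_1=1-u_1$ and $w_i=(1-u_i)w_{i-1}$ or $w_{i-1}(1-u_i)$ according to whether $x_i=x_iu_i$ or $x_i=u_ix_i$ --- each new factor is inserted on the side that annihilates $x_i$ --- and puts $z_i=1-w_i$. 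Then $x_i$ is absorbed only by its \emph{own} $z_i$ (one checks $x_iw_i=0$ or $w_ix_i=0$, i.e.\ $x_i=x_iz_i$ or $x_i=z_ix_i$), and everything is pulled into $\iip{z_n}$ by a descending chain rather than by a one-sided unit property: $z_i-z_{i-1}=w_{i-1}-w_i$ telescopes to $u_iw_{i-1}$ or $w_{i-1}u_i$, which lies in $\iip{u_i}\subseteq\iip{x_i,\ldots,x_n}\subseteq\iip{z_i,\ldots,z_n}$, whence $z_{i-1}\in\iip{z_i,\ldots,z_n}$ and, descending on $i$, $\iip{z_n}=\iip{z_1,\ldots,z_n}\supseteq\iip{x_1,\ldots,x_n}=R$. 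The triangular hypothesis $u_i\in\iip{x_i,\ldots,x_n}$ is precisely what powers this descent; your two-generator collapse discards that structure except for the single surviving fact $v\in\iip{b}$.

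That single fact is actually enough to close your gap, provided you replace the demand ``$a=ac$'' by the same difference trick. In your same-side case $a=au$, $b=bv$ with $v\in\iip{b}$, take $c=u+v-vu$, so that $1-c=(1-v)(1-u)$ in $R^+$. Then $b(1-c)=(b-bv)(1-u)=0$ gives $b=bc\in\iip{c}$, hence $v\in\iip{b}\subseteq\iip{c}$, hence $u=c-v+vu\in\iip{c}$, hence $a=au\in\iip{c}$ because $\iip{c}$ is a two-sided ideal. What you should \emph{not} try to carry through the induction is the strengthened conclusion that the new generator again satisfies $c=cw$ or $c=wc$: you give no argument for it even in the cross cases, it is not needed (only $v\in\iip{b}$ is used, and that you do obtain), and there is no reason to expect the paper's $z_n$ to have this form.
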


\begin{proof}
Define elements $w_1, \ldots, w_n$ in $1+R\subseteq R^+$ by setting $w_1=1-u_1$ and for $i\geq 2$
$$w_i =
\begin{cases}
w_{i-1}(1-u_i) &\text{if $x_i=u_i x_i$,}\\
(1-u_i) w_{i-1} &\text{if $x_i=x_i u_i$.}\\
\end{cases}
$$
Let further  $z_i=1-w_i\in R$; we claim that $R=\iip{z_n}$.

\smallskip
Notice first that for all $i$ we have $x_i = z_i x_i$ or $x_i=x_i z_i$; in any case, $x_i\in\iip{z_i}$. It follows that $u_i\in\iip{z_i, \ldots, z_n}$. Since $z_i-z_{i-1}$ simplifies (for $i\geq 2$) to $u_i w_{i-1}$ or to $w_{i-1}u_i$, we further deduce that $z_{i-1}$ is in $\iip{z_i, \ldots, z_n}$. Thus,
$$\iip{z_n}=\iip{z_1, \ldots, z_n}=\iip{x_1, \ldots, x_n}=R$$
as claimed.
\end{proof}

We have the particular case mentionned in the abstract by setting $u_i=x_i$:

\begin{cor}
The free rng on finitely many idempotents is singly generated as an ideal.\qed
\end{cor}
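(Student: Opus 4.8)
The plan is to obtain this as the direct specialization of Theorem~\ref{main} in which each $u_i$ is taken to be $x_i$ itself. First I would recall the object in question: the free rng on the idempotents $e_1,\ldots,e_n$ is the quotient $R$ of the free associative (non-unital) $\IZ$-algebra on $n$ generators by the two-sided ideal generated by the elements $x_i^2-x_i$, with $e_i$ denoting the image of $x_i$. By construction $R$ is generated as a rng, \emph{a fortiori} as an ideal, by $\{e_1,\ldots,e_n\}$, so it is finitely generated in the required sense. It is also an irng: a spanning monomial $e_{i_1}\cdots e_{i_k}$ with $k\ge 2$ lies in $R^2$ trivially, while a single generator satisfies $e_i=e_i\cdot e_i\in R^2$, whence $R=R^2$.

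Next I would check the hypothesis of Theorem~\ref{main} with $x_i:=e_i$ and $u_i:=e_i$ for every $i$. On the one hand $e_i$ belongs to $\{e_i,\ldots,e_n\}\subseteq\iip{e_i,\ldots,e_n}$; on the other hand the defining relation gives $e_i=e_i^2=x_iu_i$ (and equally $e_i=u_ix_i$). Thus both requirements of the theorem hold, and Theorem~\ref{main} yields $\wt(R)=1$; that is, $R$ is singly generated as an ideal.

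I do not expect a genuine obstacle here, since all the mathematical content is already carried by Theorem~\ref{main}: the remaining points are the bookkeeping facts that the free rng on finitely many idempotents exists, is finitely generated, is idempotent, and has idempotent canonical generators, all of which are immediate from the universal construction above. If one wished to exhibit the single generator explicitly, one could unwind the recursion $w_1=1-e_1$, $w_i=(1-e_i)w_{i-1}$ appearing in the proof of Theorem~\ref{main} (using the relation $x_i=x_iu_i$), which produces the generator $z_n=1-(1-e_n)(1-e_{n-1})\cdots(1-e_1)$; but this explicit form is not needed for the statement.
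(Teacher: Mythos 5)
Your proposal is correct and is exactly the paper's argument: the corollary is obtained as the special case of Theorem~\ref{main} with $u_i=x_i$, using $x_i=x_i^2=x_iu_i$. The extra verifications you include (existence of the free rng on idempotents, finite generation, idempotency of $R$, and the explicit generator $z_n$) are harmless bookkeeping that the paper leaves implicit.
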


We state and prove two further corollaries of Theorem~\ref{main}.

\begin{cor}
Every finite irng $R$ is generated by idempotents as an ideal and $\wt(R)=1$.
\end{cor}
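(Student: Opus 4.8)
The plan is to reduce to Theorem~\ref{main}: if one can show that a finite irng $R$ is generated as an ideal by a finite set of idempotents, then applying Theorem~\ref{main} to such a generating set $\{x_1,\dots,x_n\}$ with the choice $u_i:=x_i$ finishes the job. Indeed $u_i=x_i\in\iip{x_i}\subseteq\iip{x_i,\dots,x_n}$ and $x_iu_i=x_i^2=x_i$, so the hypotheses of Theorem~\ref{main} hold and $\wt(R)=1$. Thus the whole problem is to prove the generation statement.

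For that, the key elementary fact is that in a finite ring every element $x$ has an idempotent power: the multiplicative powers $x,x^2,x^3,\dots$ form a finite subsemigroup, so they are eventually periodic and one finds $k\ge 1$ with $x^{2k}=x^k$. Let $E\subseteq R$ be the set of all idempotents and set $I=\iip{E}$. By the previous fact, every $x\in R$ has a power lying in $E\subseteq I$, so in the quotient rng $R/I$ every element is nilpotent; that is, $R/I$ is nil. Being a quotient of an irng, $R/I$ is also an irng. Now a finite nil rng is nilpotent (this is standard; for instance one can adjoin a unit to view it as a nil two-sided ideal of a \emph{finite} unital ring and invoke that nil one-sided ideals of Artinian rings are nilpotent), and a nilpotent irng $S$ must vanish, since $S=S^2=S^4=\dots=S^{2^j}$ for every $j$ while $S^{2^j}=0$ once $j$ is large. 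Hence $R/I=0$, i.e.\ $R=I=\iip{E}$.

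Since $R$ is finite, $E$ is finite, so $R=\iip{e_1,\dots,e_n}$ for finitely many idempotents $e_1,\dots,e_n$; this is exactly the generation statement we needed. Feeding $\{e_1,\dots,e_n\}$ with $u_i:=e_i$ into Theorem~\ref{main}, as described above, gives $\wt(R)=1$.

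I expect the crux to be the middle paragraph --- specifically the step identifying $R/I$ as a nil, hence nilpotent, hence trivial rng; once that vanishing is established the argument is pure bookkeeping. The remaining ingredients (idempotent powers in finite rings, the irng property passing to quotients, vanishing of nilpotent irngs) are routine, and the only nontrivial input invoked is the classical nilpotency of nil ideals in finite/Artinian rings.
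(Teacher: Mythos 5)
Your proposal is correct and follows essentially the same route as the paper: both reduce to Theorem~\ref{main} via the generating set of idempotents (with $u_i=x_i$), and both show $R=\iip{E}$ by observing that every element of a finite rng has an idempotent power, so that $R/\iip{E}$ is nil, hence nilpotent (Levitzki/Artinian), hence zero as a nilpotent irng. The paper merely phrases this contrapositively, extracting a non-nilpotent element of $R/I$ and exhibiting an idempotent power of a preimage outside $I$.
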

\begin{proof}
Let $R$ be a finite irng and $I$ be the ideal generated by the idempotents in $R$.
Suppose by contradiction that $I\neq R$.
Then, $R/I$ is a finite irng and hence contains a non-nilpotent element (e.g.,\ by Levitzki's theorem).
Let $a\in R$ be a pre-image of a non-nilpotent element in $R/I$.
Since $R$ is finite, $a^m=a^n$ for some $m\neq n$. Since $a^{m^k}=a^{n^k}$ for every $k$,
we may assume that $a^m=a^n$ with $m>2n$. Then, $a^{m-n}$ is an idempotent in $R\setminus I$.
A contradiction. This proves the first half, and the second half follows from the previous theorem.
\end{proof}

\begin{rem}We note that a finite irng need not be unital; e.g.,
$R=\left(\begin{smallmatrix} *&*&*\\0&0&*\\0&0&*\end{smallmatrix}\right)\subset M_3(\IZ/2\IZ)$.
\end{rem}

Smoktunowicz and later Bergman have independently informed us about the following corollary of Theorem 2. With the kind permission of George Bergman we include his argument. Recall that a semigroup $S$ is called idempotent if $S^2=S$. We do not assume that a semigroup contains an identity element.

\begin{cor}  \label{C-bergman} Let $S$  be an idempotent semigroup which is finitely generated
as a semigroup, or more generally, as a left ideal, and  let $k$ be a unital
commutative ring. The semigroup algebra  $R = k S$  is
generated as a $2$-sided ideal by a single element.
\end{cor}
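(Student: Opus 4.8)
The plan is to reduce the statement to Theorem~\ref{main}: I will produce a finite generating set $y_1,\dots,y_p$ of $R$ as a two-sided ideal together with, for each $a$, an element $u_a\in\iip{y_a}$ satisfying $y_a=u_a y_a$. Since then $u_a\in\iip{y_a}\subseteq\iip{y_a,\dots,y_p}$ automatically (for any ordering of the $y_a$'s), Theorem~\ref{main} will give $\wt(R)=1$.

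First I would unwind the hypothesis at the level of $S$. If $S$ is generated as a left ideal by a finite set $\{s_1,\dots,s_n\}$, then $S=\{s_1,\dots,s_n\}\cup S\{s_1,\dots,s_n\}$, and since $S=S^2$ this collapses to $S=\bigcup_{i=1}^n Ss_i$. (The case of $S$ finitely generated as a semigroup is subsumed, taking $\{s_1,\dots,s_n\}$ to be a semigroup generating set.) Passing to the semigroup algebra, $R=kS=\sum_{i=1}^n(kS)s_i$, so in particular $R=\iip{s_1,\dots,s_n}$.

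Next, for each $i$ use $s_i\in S=\bigcup_j Ss_j$ to write $s_i=c_i s_{\tau(i)}$ with $c_i\in S$, which defines a map $\tau$ on $\{1,\dots,n\}$ with $\iip{s_i}\subseteq\iip{s_{\tau(i)}}$ for all $i$. Let $J_1,\dots,J_p$ be the maximal members of the finite poset $\{\iip{s_1},\dots,\iip{s_n}\}$; since every $\iip{s_j}$ lies inside some $J_a$, we get $R=J_1+\dots+J_p$. Fix $a$. Starting from an index realising $J_a$ and iterating $\tau$, maximality of $J_a$ forces the ideal to remain equal to $J_a$ along the whole orbit, and by finiteness the orbit enters a $\tau$-cycle $(i_1,\dots,i_q)$ with $\iip{s_{i_1}}=\dots=\iip{s_{i_q}}=J_a$. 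Telescoping $s_{i_1}=c_{i_1}s_{i_2}=c_{i_1}c_{i_2}s_{i_3}=\dots$ around the cycle gives $s_{i_1}=e\,s_{i_1}$ with $e:=c_{i_1}\cdots c_{i_q}\in S$. For each $a$ fix such a cycle and set $y_a:=s_{i_1}$ for the corresponding $i_1$; then $\iip{y_a}=J_a$, so $R=\iip{y_1,\dots,y_p}$, together with relations $y_a=e_a y_a$ with $e_a\in S$.

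The remaining — and, I expect, the only genuinely delicate — point is to put the multiplier $e_a$ inside $\iip{y_a}$. This is where maximality pays off a second time: writing $e_a=c's_j$ with $c'\in S$ (again using $S=\bigcup_j Ss_j$, valid since $e_a\in S$), the relation $y_a=c's_j y_a$ shows $y_a\in\iip{s_j}$, so $J_a=\iip{y_a}\subseteq\iip{s_j}$, hence $\iip{s_j}=J_a=\iip{y_a}$ by maximality; thus $s_j\in\iip{y_a}$, and therefore $e_a=c's_j\in\iip{y_a}$ because $\iip{y_a}$ is a two-sided ideal. Now Theorem~\ref{main} applies, with any ordering of $y_1,\dots,y_p$ and $u_a:=e_a$ (the relation being of the form $y_a=u_a y_a$), and yields $\wt(R)=1$, i.e.\ $R$ is singly generated as a two-sided ideal. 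The two steps to watch are the use of idempotence $S=S^2$ to turn a finite left-ideal generating set of $S$ into the covering by the $Ss_i$, and the combined use of the cyclic structure of $\tau$ and of maximality of $J_a$ to land $e_a$ inside $\iip{y_a}$; the rest is routine manipulation inside $R=kS$.
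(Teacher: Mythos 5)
Your proof is correct, and it reaches the same intermediate goal as the paper---a finite set of two-sided ideal generators $y_1,\dots,y_p$ of $R$, each satisfying $y_a=u_ay_a$ with $u_a\in\iip{y_a}$, to which Theorem~\ref{main} then applies---but the combinatorial core is genuinely different. The paper isolates that core as a purely semigroup-theoretic statement (Lemma~\ref{L-bergman}) and proves it by running a finiteness-plus-transitivity argument twice: first with the relation $y<_1x\Leftrightarrow y\in Sx$ to extract $X_1=\{x\in X: x\in Sx\}$ with $S=SX_1$, then with a second relation $y<_0x\Leftrightarrow y\in S^+xS^+y$ to refine $X_1$ to a set $X_0$ satisfying $x\in SxSx$ for all $x\in X_0$ and $S=SX_0S$. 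You instead work inside the algebra $R=kS$ with the finite poset of principal two-sided ideals $\iip{s_i}$: the cycle structure of your choice function $\tau$ replaces the paper's first repetition argument and produces the relations $y_a=e_ay_a$, while maximality of $J_a$ replaces the second relation $<_0$ and is what lands the multiplier in the ideal (via $e_a=c's_j$, $\iip{y_a}\subseteq\iip{s_j}$, hence equality by maximality, hence $s_j$ and then $e_a$ lie in $\iip{y_a}$). I checked the two delicate steps and they hold: idempotence does collapse the left-ideal generation to $S=\bigcup_iSs_i$, and every index on a $\tau$-cycle reached from an index realising $J_a$ still realises $J_a$ because $\iip{s_i}\subseteq\iip{s_{\tau(i)}}$ and $J_a$ is maximal. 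As for what each approach buys: Bergman's version yields a conclusion internal to the semigroup ($x\in SxSx$), independent of the coefficient ring $k$ and reusable outside the algebra setting, whereas your maximality trick is slightly more economical, doing the work of the second descent in one stroke at the cost of phrasing everything in terms of ideals of $kS$.
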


Note that $S$ is idempotent and generated by a finite set $X \subset S$ as a left ideal if and only if $S=SX$. For the proof we need the following lemma.

\begin{lem} \label{L-bergman}
Let  S  be a semigroup containing a finite set  X  such that $S = S X.$
Then  $S$  is generated as a 2-sided ideal by a subset  $X_0$  of
$X$  with the property that every  $x\in X_0$  satisfies  $x\in S x S x$.
\end{lem}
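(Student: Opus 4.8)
The plan is to take for $X_0$ the set $W=\{\,w\in X: w\in SwSw\,\}$ itself, and to show that $W$ already generates $S$ as a two-sided ideal. Throughout I would use that $S=SX$ forces $S=S^2$ (as $X\subseteq S$ gives $S=SX\subseteq S^2\subseteq S$), and that $S=SX$ lets one ``peel a letter off the right'' of any element of $S$, as often as one likes. On the finite set $X$ consider the preorder $x\preceq y\iff x\in\iip{y}_S=\{y\}\cup Sy\cup yS\cup SyS$. It has maximal elements $M$, every $x\in X$ is $\preceq$-below some element of $M$, and $M$ generates $S$ as an ideal because $X\subseteq\iip{M}_S$ gives $S=SX\subseteq S\,\iip{M}_S\subseteq\iip{M}_S\subseteq S$. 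Hence it suffices to show that every $\preceq$-maximal $y$ is $\preceq$-equivalent to some $w\in W$: then every $x\in X$ lies in $\iip{w}_S$ for some $w\in W$, so $\iip{W}_S=S$.

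The first step is to check that a $\preceq$-maximal $y$ always satisfies $y\in SyS$. Write $y=sz$ with $s\in S$, $z\in X$ (possible since $S=SX$); maximality forces $z\sim y$, so $z\in\iip{y}_S$, and a short case distinction on which of $\{y\},Sy,yS,SyS$ contains $z$ (peeling $s$ once more in the first two cases) gives $y\in SyS$ in every case.

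Next, starting from a relation $y=pyq$ with $p,q\in S$, I would run a peeling loop that maintains a relation $y=Pz$ with $z\in X$, $z\sim y$ and $P\in SyS$ (to start, peel $q=q'z_0$ and take $P=pyq'$, $z=z_0$). At each stage write $z=\mu y\nu$ with $\mu,\nu\in S^1$, using $z\in\iip{y}_S$. If $\nu=1$, then $y=Pz=(P\mu)\,y\in(SyS)y=SySy$, so $y\in W$ and we stop. If $\nu\neq1$, peel $\nu=\nu'z'$ with $z'\in X$: since $y=(P\mu y\nu')z'\in Sz'$, maximality again gives $z'\sim y$, and we continue with the new $z'$ and with $P\mu y\nu'\in SyS$ in place of $P$. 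As the set $\{\,w\in X:w\sim y\,\}$ is finite, the loop either stops (giving $y\in W$) or eventually revisits a letter, in which case the intermediate relations $z_k=\mu_k y\nu_k'z_{k+1}$ telescope to $z_a=Qz_a$ for some $z_a\sim y$ with $Q\in S^1yS\subseteq\iip{z_a}_S$. A last elementary computation --- splitting on whether $Q$ lies in $\{z_a\}$, $Sz_a$, $z_aS$ or $Sz_aS$, and iterating the relation at most twice in the first three cases (for instance $z_a=Q^2z_a$ with $Q^2\in Sz_aS$, or $z_a=z_a^4$ when $Q=z_a$) --- shows $z_a\in Sz_aSz_a$, i.e.\ $z_a\in W$, with $z_a\sim y$.

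I expect the peeling loop to be the real obstacle. A single peeling only yields $y\in SySyS$, not $y\in SySy$, whenever the peeled letter $z$ contains a copy of $y$ that is not at its right end, so one is forced to iterate; the substance is to control the iteration using the finiteness of the $\preceq$-class of $y$ and, when the loop closes up, to extract from the closed-up word the clean one-sided relation $z_a=Qz_a$ with $Q$ in the two-sided ideal of $S$ generated by $z_a$.
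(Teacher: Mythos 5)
Your proof is correct, but it is organized quite differently from the paper's. The paper runs the finiteness-forces-repetition argument twice, each time with a \emph{transitive} relation on all of $X$: first $y<_1x\iff y\in Sx$, yielding $X_1=\{x\in X: x\in Sx\}$ with $S=SX_1$; then $y<_0x\iff y\in S^+xS^+y$, yielding $X_0=\{x\in X_1: x\in S^+xS^+x\}$ with $X_1\subseteq S^+X_0S$ and hence $S=SX_1\subseteq SX_0S$. Transitivity of $<_0$ is exactly what replaces your telescoping step, and membership in $X_1$ (i.e.\ $x\in Sx$) is what upgrades $x\in S^+xS^+x$ to $x\in SxSx$ without any case analysis. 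You instead use a single two-sided divisibility preorder, reduce to its maximal elements, and do the closing-up by hand inside each maximal class via the peeling loop; this forces you to carry the extra preliminary fact that a maximal $y$ lies in $SyS$ and to finish with the $Q\in\{z_a\}\cup Sz_a\cup z_aS\cup Sz_aS$ case split. Both routes work and rest on the same engine (a finite set plus an iterable relation must produce a self-related element), but the paper's two-relation setup is more modular and avoids most of your bookkeeping. One spot in your sketch is compressed to the point of being incomplete as written: in Step 1, for the cases $z\in\{y\}\cup Sy$ you get $y=py$ with $p\in S$, and ``peeling $p=p'x'$ once more'' only gives $y=p'x'y$ with $x'\in X$; to conclude $y\in SyS$ you must invoke maximality a second time to get $x'\in\iip{y}$ and substitute. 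That is the same maneuver you perform explicitly inside the loop, so it is an expository gap rather than a mathematical one, but it should be spelled out.
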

\begin{proof} First, some notation. For  $x, y \in S$,  we shall write  
$y <_1 x$  if  $y \in S x$. Clearly,  the binary relation  $<_1$  is transitive.
Thus, our assumption says $ \forall y\in S\  \exists x\in X :  y <_1 x.$

Hence, starting with any  $y\in S$,  we can find an infinite chain
of elements of  $X$  going upward from it with respect to  $<_1$.
Since  $X$  is finite, this chain must involve a repetition; and applying
transitivity both to the chain from  $y$  to an element  $x$  which is
repeated, and to the chain from that  $x$  to a repetition of  $x$,  we get
$\forall y\in S\  \exists x\in X:  y <_1 x <_1 x.$
Hence, letting $X_1 = \{x\in X | x <_1 x\}$, we see that $S = S X_1.$

In the remainder of this proof, for convenience in writing
the ideal generated by an element, we shall write  $S^+$  for
the monoid obtained by adjoining an identity element to  $S$.
We now define a second relation: For  $x,y\in S$,  we shall write  $y <_0 x$  if  $y\in S^+ x S^+ y$. Again, the binary relation  $<_0$  is transitive. Indeed, if $y \in S^+xS^+y$ and $x \in S^+ z S^+x$, then $y \in S^+xS^+y \subset S^+(S^+ z S^+x)S^+y \subset S^+ z S^+y,$ showing that $y <_0 x <_0 z$ implies $y<_0 z$.

Note that for  $y\in X_1$,  if we take an element of  $z \in S$  witnessing
the relation  $y\in S y$, i.e. $y=zy$, then $z$ will have a
right factor in  $X_1$, i.e.\ $z=z'x$ with $z' \in S^+$ and $x \in X_1$. We conclude $y \in S^+xS^+y$ for some $x \in X_1$ and thus
$\forall y\in X_1 \ \exists x\in X_1:  y <_0 x.$


From finiteness of  $X_1$ and a similar argument as above,  we get:
$\forall y\in X_1\  \exists x\in X_1:  y <_0 x <_0 x$. We set
$X_0 = \{x\in X_1 | x <_0 x\}.$ In particular, for every $x \in X_0$, we have $x \in SxSx$. Moreover, for all $x \in X_1$, $x \in S^+X_0S^+x \subset S^+X_0S$ and hence $X_1 \subset S^+X_0S$. We conclude
$S=SX_1 \subset SX_0S.$ This finishes the proof.
\end{proof}


\begin{proof}[Proof of Corollary~\ref{C-bergman}] Apply Theorem~\ref{main} to the set  $X_0$  given
by Lemma~\ref{L-bergman}.
\end{proof}


%
%
%
%
%


%
\section{A relation to the Wiegold problem}
%
Let $R$ be a rng and $n$ be a positive integer. For $1\le i\neq j\le n$ and $r\in R$,
we denote by $E_{i,j}(r)$ the elementary matrix with $1$'s on the diagonal,
$r$ in the $(i,j)$-th entry, and $0$'s everywhere else.
Thus, $E_{i,j}(R)\in\GL_n(R^+)$, where $R^+$ is again the unitization of $R$.
We define $\EL_n(R)$ to be the subgroup of $\GL_n(R^+)$ generated by
all the elementary matrices $E_{i,j}(r)$.
They satisfy the Steinberg relations:
\begin{itemize}
\item $E_{i,j}(r) E_{i,j}(s)=E_{i,j}(r+s)$;
\item $[E_{i,j}(r),E_{j,k}(s)]=E_{i,k}(rs)$ if $i\neq k$;
\item $[E_{i,j}(r), E_{k,l}(s)]=1$ if $i\neq l$ and $j\neq k$.
\end{itemize}
Recall that the weight of a group $G$ is defined as
\[
\wt(G)=\min\{ n : \mbox{$G$ is generated by $n$ elements as a normal subgroup}\}.
\]
The following theorem relates the Wiegold problem to the irng problem.

\begin{thm}\label{thm:el}
Let $R$ be a finitely generated irng and $n\geq 3$.
Then $\EL_n(R)$ is a finitely generated perfect group such that
$\wt(R)/n^2 \le \wt(\EL_n(R)) \le \lceil 2\wt(R)/(n^2-n-2) \rceil$.
\end{thm}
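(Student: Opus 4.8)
The plan is to verify three things in turn --- that $\EL_n(R)$ is finitely generated, that it is perfect, and then each of the two weight bounds --- with the bulk of the work going into the upper bound.

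\smallskip
\noindent\emph{Finite generation and perfectness.} Fix a finite generating set $a_1,\dots,a_p$ of $R$ as a rng; then $R=\iip{a_1,\dots,a_p}$, so in particular $\wt(R)\le p<\infty$. From $E_{i,j}(r)E_{i,j}(s)=E_{i,j}(r+s)$ and $E_{i,j}(rs)=[E_{i,k}(r),E_{k,j}(s)]$ (with $k\notin\{i,j\}$, which exists since $n\ge3$), an easy induction on word length shows the finitely many $E_{i,j}(a_l)$ already generate $E_{i,j}(r)$ for all $r\in R$, hence generate $\EL_n(R)$. For perfectness: given $r\in R=R^2$, write $r=\sum_t x_ty_t$ and fix $k\notin\{i,j\}$; then $E_{i,j}(r)=\prod_t[E_{i,k}(x_t),E_{k,j}(y_t)]$, so every generator, hence $\EL_n(R)$, is a product of commutators.

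\smallskip
\noindent\emph{Lower bound.} Suppose $g_1,\dots,g_k$ normally generate $\EL_n(R)$. Each element of $\EL_n(R)$ is a product of matrices $I+rE_{i,j}$, so lies in $I+M_n(R)$; write $g_s=I+M_s$, $M_s\in M_n(R)$, and let $J\trianglelefteq R$ be the ideal generated by the at most $kn^2$ entries of the $M_s$. Reduction modulo $J$ gives a surjection $\EL_n(R)\to\EL_n(R/J)$, $E_{i,j}(r)\mapsto E_{i,j}(r+J)$, which kills every $g_s$; since these normally generate the source, the image $\EL_n(R/J)$ is trivial, and this forces $R/J=0$ (as $E_{1,2}(\bar r)=I$ only if $\bar r=0$). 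Hence $R=J=\iip{\text{entries of the }M_s}$ is generated by $\le kn^2$ elements as an ideal, i.e.\ $\wt(R)\le n^2k$; with $k=\wt(\EL_n(R))$ this is the lower bound.

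\smallskip
\noindent\emph{Upper bound.} Write $w=\wt(R)$, pick $y_1,\dots,y_w$ generating $R$ as an ideal, and recall from the Introduction that $R^3=R$ gives $R=\spann\sum_{m=1}^w Ry_mR$. Set $c=\binom n2-1=(n^2-n-2)/2$ and $N=\lceil w/c\rceil=\lceil 2w/(n^2-n-2)\rceil$. I would partition $\{1,\dots,w\}$ into blocks $B_1,\dots,B_N$ of size $\le c$, and for each $s$ inject $B_s$ into the set $P$ of the $c$ strictly upper-triangular positions $\ne(1,2)$; then put $g_s\in\EL_n(R)$ equal to the product, in order of increasing column and then increasing row, of the $E_{i,j}(y_m)$ ($m\in B_s$) placed in their assigned positions. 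Everything reduces to the claim that \emph{for every $m\in B_s$, every $p\ne q$ and all $a,b\in R$, the matrix $E_{p,q}(ay_mb)$ lies in the normal closure $\mathcal N(g_s)$ of $g_s$ in $\EL_n(R)$}: granting this, the (normal) subgroup generated by $\mathcal N(g_1),\dots,\mathcal N(g_N)$ contains $E_{p,q}(s)$ for all $s\in\sum_m Ry_mR=R$, hence equals $\EL_n(R)$, so the $g_s$ normally generate $\EL_n(R)$ and $\wt(\EL_n(R))\le N$.

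\smallskip
\noindent\emph{The claim --- and where the difficulty lies.} This step is Steinberg-relation calculus. First I would prove a ``spreading'' lemma: once, for a normal subgroup $\mathcal N$, a position $(i_0,j_0)$ and some $r$, all the $E_{i_0,j_0}(br)$ ($b\in R$) lie in $\mathcal N$, commuting them with arbitrary elementaries --- using $n\ge3$ to supply auxiliary indices and $R=R^2$ to absorb stray ring factors --- forces $E_{p,q}(s)\in\mathcal N$ for all $p\ne q$ and all $s\in RrR$; so it suffices to place, for each $m\in B_s$ with assigned position $(i_0,j_0)$, the matrices $E_{k,j_0}(by_m)$ ($b\in R$) into $\mathcal N(g_s)$ for one convenient $k$. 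To do this, note $g_s=I+M$ is upper unitriangular and, because the slot $(1,2)$ was omitted, columns $1$ and $2$ of $M$ vanish; a short computation gives, for $k\in\{1,2\}\setminus\{i_0\}$, that $[E_{k,i_0}(b),g_s]=I+bE_{k,i_0}M(I+M)^{-1}$ is a product of elementary matrices in row $k$, from which a further commutator extracts the single factor $E_{k,j_0}(bz)$, where $z$ agrees with $M_{i_0,j_0}$, hence with $y_m$, up to a sum of products of $y_{m'}$'s indexed by positions strictly closer to the diagonal than $(i_0,j_0)$. Processing the positions of $B_s$ in order of increasing $j-i$, all those closer positions have already been handled when $(i_0,j_0)$ is reached, so the spurious products in $z$ can be cancelled using matrices already known to lie in $\mathcal N(g_s)$, leaving exactly $E_{k,j_0}(by_m)$ for all $b$. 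I expect this bookkeeping of the multiplicative ``carries'' --- together with the verification that $c=\binom n2-1$, the number of upper slots left after deleting $(1,2)$, is exactly the capacity making the estimate $\lceil 2w/(n^2-n-2)\rceil$ come out --- to be the substantive part; the case $n=3$, where $P=\{(1,3),(2,3)\}$ and no carries occur, is the clean prototype.
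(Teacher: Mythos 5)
Your frame is the paper's: finite generation and perfectness from the Steinberg relations, and the lower bound by reducing modulo the ideal generated by the $\le kn^2$ entries of $A_s-I$ and observing that $\EL_n(R/J)=1$ forces $R=J$ --- that part is essentially verbatim the paper's argument. The upper bound also follows the same blueprint (pack $(n^2-n-2)/2$ generators into the strictly upper triangular slots of a unitriangular matrix in $\EL_n(R)$, one slot deleted, then extract each entry by commutators and finish with $R=\spann\sum_m Ry_mR$), but the execution genuinely differs, and the difference is instructive. The paper deletes the slot $(1,n)$ rather than $(1,2)$, and then computes
\[
[E_{2,i}(s),[A,E_{j,n}(t)]]=[E_{2,i}(s),\textstyle\prod_k E_{k,n}(a_{k,j}t)]=E_{2,n}(sa_{i,j}t),
\]
together with the mirror image $E_{n,2}(sa_{i,j}t)\in\iip{A}$ coming from row $1$; between them these reach every slot except $(1,n)$, which is why that is the slot sacrificed. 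The point of conjugating $E_{j,n}(t)$ rather than an $E_{k,i_0}(b)$ is that $AE_{j,n}(t)A^{-1}$ only sees the $j$-th \emph{column of $A$ itself}, linearly --- no $(I+M)^{-1}$, no higher powers of $M$ --- so each entry $a_{i,j}$ is extracted in one shot with no carries and no induction on the distance to the diagonal. Your computation $[E_{k,i_0}(b),g_s]=I+bE_{k,i_0}M(I+M)^{-1}$ is correct (given that columns $1$ and $2$ of $M$ vanish, which is exactly what deleting $(1,2)$ buys you), and your induction on $j-i$ does handle both the higher-order terms of $(I+M)^{-1}$ and the cross terms arising when you write $g_s$ as an ordered product of elementaries; so I believe your route closes, but it carries real bookkeeping that the paper's choice of deleted slot makes vanish. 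If you rewrite this, I would adopt the $(1,n)$-deletion and the double commutator above: same constant, same count $N=\lceil 2\wt(R)/(n^2-n-2)\rceil$, and the ``substantive part'' you flag at the end disappears.
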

\begin{proof}
That $\EL_n(R)$ is finitely generated and perfect follows from the Steinberg relations.
First suppose that $\EL_n(R)$ is generated by $A_1,\ldots,A_m$ as a normal subgroup.
Then, for the unit matrix $I$, one has $A_i-I \in M_n(R)$.
Thus, the collection $Z$ of all entries of $A_i-I$, $i=1,\ldots,m$ is
a subset of $R$ whose cardinality is at most $mn^2$.
Since all $A_i$'s are killed by the canonical homomorphism from
$\EL_n(R)$ onto $\EL_n(R/\iip{Z})$, one sees that $R=\iip{Z}$.
This proves $\wt(R)\le \wt(\EL_n(R))n^2$.
To prove the other inequality, consider the upper triangular matrix
$A=[a_{i,j}]\in\EL_n(R)$ with $1$'s on the diagonal and $a_{i,j}=0$ for $i>j$
and for $(i,j)=(1,n)$. Thus $A$ can have $(n^2-n-2)/2$ many
non-zero entries from $R$.
For every $1\le i,j<n$ and $s,t\in R$, one has
\[
\iip{A}\ni [E_{2,i}(s), [A,E_{j,n}(t)]]
=[E_{2,i}(s), \prod_{k=1}^n E_{k,n}(a_{k,j}t)]
=E_{2,n}(sa_{i,j}t).
\]
Similarly, $E_{n,2}(sa_{i,j}t)\in\iip{A}$ for every $1< i,j\le n$ and $s,t\in R$.
Thus by the Steinberg relations and the fact that $R^2=R$, one has
$E_n(RZR)\subset \iip{A}$ for $Z=\{ a_{i,j} \}$.
This completes the proof.
\end{proof}

Hence, one would solve the Wiegold problem if one finds
a finitely generated irng $R$ with $\wt(R)>9$.
In this regard, the irng problem is harder than the Wiegold problem,
but the authors feel that rngs may be more tractable than groups.

\vspace{0.2cm}

There is another connection to the Wiegold problem. Let $G$ be a group, $k$ be a finitely generated unital ring, and 
let $\omega_kG  \subset k G$ be the augmentation ideal inside the 
group ring of $G$ with coefficients in $k$. 
Since $\omega_k(G)/\omega^2_k(G) = k \otimes_\IZ G_{ab}$, $\omega_k(G)$ 
is a finitely generated irng in many cases, e.g.\ if $G$ is perfect. 
It is also clear that $w(\omega_k(G)) \leq w(G)$. 
Note that this inequality can be strict. Indeed, Theorem~\ref{main} 
shows that $w(\omega_{\IZ/3\IZ}(\IZ/2\IZ \ast \cdots \ast \IZ/2\IZ))=1$ 
since $\omega_{\IZ/3\IZ}(\IZ/2\IZ \ast \cdots \ast \IZ/2\IZ)$ is generated 
by finitely many idempotents. 
On the other side $w(\IZ/2\IZ \ast \cdots \ast \IZ/2\IZ)$ equals the number 
of factors in the free product. We do not know of any example where 
the inequality is strict if $R=\IZ$.

It is natural to ask whether a finitely generated irng must have few generators as a left ideal in itself. In this case, more can be said using $\ell^2$-invariants. Recall that 
$ b^{(2)}_1(G)$ -- a numerical invariant of $G$ ranging in the interval $[0,\infty]$ -- denotes the first $\ell^2$-Betti number of $G$

\begin{thm}
Let $G$ be a group. Then, $\omega_{\IZ}(G)$ needs at least $\lceil b^{(2)}_1(G)+1 \rceil$ generators as a left ideal in itself.
\end{thm}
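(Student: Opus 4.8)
The plan is to reformulate the phrase ``$\omega_{\IZ}(G)$ is generated by $n$ elements as a left ideal in itself'' as a surjectivity statement for $\IZ G$-modules, and then to extract the lower bound from von Neumann dimension theory, invoking Lück's homological description of $\ell^2$-Betti numbers. Throughout write $\omega=\omega_{\IZ}(G)$.

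First I would record the elementary translation. Since $\IZ G=\IZ\cdot 1\oplus\omega$ as abelian groups, the left ideal of the rng $\omega$ generated by a subset $S\subseteq\omega$ is $\IZ S+\omega S=(\IZ\cdot 1+\omega)S=\IZ G\cdot S$, the last equality because $\omega$ is a two-sided ideal of $\IZ G$ (so $\IZ G\cdot S\subseteq\omega$). Hence $\omega$ is generated by $n$ elements as a left ideal in itself if and only if there is a surjection of left $\IZ G$-modules $(\IZ G)^n\twoheadrightarrow\omega$. If no such finite surjection exists the asserted bound is vacuous, so I may assume one does; and I may assume $G$ is infinite, since for finite $G$ one has $b^{(2)}_1(G)=0$ and the bound then only asserts that a non-zero ideal needs a generator.

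Next I would pass to the group von Neumann algebra $\mathcal N(G)$, viewed as a $\IZ G$-bimodule, equipped with Lück's dimension function $\dim_{\mathcal N(G)}$ on arbitrary $\mathcal N(G)$-modules, which is additive over short exact sequences and satisfies $\dim_{\mathcal N(G)}\mathcal N(G)=1$. Applying $\mathcal N(G)\otimes_{\IZ G}-$ to the surjection $(\IZ G)^n\twoheadrightarrow\omega$ and using right-exactness of the tensor product yields a surjection $\mathcal N(G)^n\twoheadrightarrow\mathcal N(G)\otimes_{\IZ G}\omega$, whence $\dim_{\mathcal N(G)}(\mathcal N(G)\otimes_{\IZ G}\omega)\le n$. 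To compute the left-hand side I would apply the same functor to $0\to\omega\to\IZ G\to\IZ\to 0$: since $\IZ G$ is free, $\operatorname{Tor}^{\IZ G}_1(\mathcal N(G),\IZ G)=0$, so the long exact $\operatorname{Tor}$-sequence becomes
\[
0\longrightarrow\operatorname{Tor}^{\IZ G}_1(\mathcal N(G),\IZ)\longrightarrow\mathcal N(G)\otimes_{\IZ G}\omega\longrightarrow\mathcal N(G)\longrightarrow\mathcal N(G)\otimes_{\IZ G}\IZ\longrightarrow 0 .
\]
By Lück's homological description of $\ell^2$-Betti numbers (valid for every group) one has $\dim_{\mathcal N(G)}\operatorname{Tor}^{\IZ G}_1(\mathcal N(G),\IZ)=b^{(2)}_1(G)$ and $\dim_{\mathcal N(G)}(\mathcal N(G)\otimes_{\IZ G}\IZ)=b^{(2)}_0(G)=0$ because $G$ is infinite. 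Additivity of $\dim_{\mathcal N(G)}$ then forces $\dim_{\mathcal N(G)}(\mathcal N(G)\otimes_{\IZ G}\omega)=b^{(2)}_1(G)+1$, so $n\ge b^{(2)}_1(G)+1$, and as $n$ is an integer, $n\ge\lceil b^{(2)}_1(G)+1\rceil$.

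The diagram chase is routine; the one point that needs care is the legitimate use of the dimension function, since it is applied to $\mathcal N(G)$-modules that are neither finitely generated nor projective (namely $\mathcal N(G)\otimes_{\IZ G}\omega$ and the $\operatorname{Tor}$ term), and one must invoke its additivity on arbitrary short exact sequences together with the extension $b^{(2)}_k(G)=\dim_{\mathcal N(G)}\operatorname{Tor}^{\IZ G}_k(\mathcal N(G),\IZ)$ of the definition of $\ell^2$-Betti numbers to all groups. Granting these standard facts from Lück's theory, the argument is short. As a sanity check, for $G=F_k$ free of rank $k$ one has $\omega\cong(\IZ F_k)^k$ and $b^{(2)}_1(F_k)=k-1$, so both sides equal $k$ and the estimate is sharp.
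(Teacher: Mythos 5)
Your argument is correct, and it takes a genuinely different (roughly dual) route from the paper's. The paper works contravariantly: it identifies $\hom_{\IZ G}(\omega_{\IZ}(G),\ell^2 G)$ with the cocycle space $Z^1(G,\ell^2 G)$, quotes the result of Peterson--Thom that this space has L\"uck dimension $b^{(2)}_1(G)+1$, and then observes that a generating set $S$ of $\omega_{\IZ}(G)$ as a left ideal gives an \emph{injection} $\hom_{\IZ G}(\omega_{\IZ}(G),\ell^2 G)\hookrightarrow (\ell^2 G)^{\oplus S}$ by evaluation, whence $b^{(2)}_1(G)+1\le |S|$. You instead work covariantly: a generating set gives a \emph{surjection} $(\IZ G)^n\twoheadrightarrow\omega_{\IZ}(G)$, you apply $\mathcal N(G)\otimes_{\IZ G}-$, and you compute $\dim_{\mathcal N(G)}(\mathcal N(G)\otimes_{\IZ G}\omega_{\IZ}(G))=b^{(2)}_1(G)+1$ from the Tor long exact sequence of $0\to\omega_{\IZ}(G)\to\IZ G\to\IZ\to 0$ together with L\"uck's identification $b^{(2)}_p(G)=\dim_{\mathcal N(G)}\operatorname{Tor}^{\IZ G}_p(\mathcal N(G),\IZ)$ and the vanishing of $b^{(2)}_0$ for infinite $G$. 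What your version buys is independence from the Peterson--Thom computation of $\dim Z^1(G,\ell^2 G)$ (which is, in effect, the dual form of your Tor calculation): everything reduces to the standard properties of L\"uck's extended dimension function, namely additivity on arbitrary short exact sequences, which you correctly flag as the one point needing care since the modules involved are neither finitely generated nor projective. The paper's version is shorter on the page because it outsources exactly that computation to the cited reference. Your opening reduction, that the left ideal of the rng $\omega_{\IZ}(G)$ generated by $S$ equals $\IZ G\cdot S$, is the same elementary observation the paper uses implicitly, and your treatment of the degenerate cases (finite $G$, infinite $b^{(2)}_1$) matches the paper's.
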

\begin{proof}
If $G$ is finite, then $b^{(2)}_1(G)=0$. Thus, we may assume that $G$ is infinite.
By~\cite{PT}, L\"uck's dimension of the space $Z^1(G,\ell^2 G)$ of $1$-cocycles on $G$ with values in the Hilbert space $\ell^2 G$ -- endowed with the $\IZ G$-module structure given by the left regular representation -- is equal to $b^{(2)}(G)+1$. For more information on $\ell^2$-Betti numbers, L\"uck's dimension function and the left regular representation, see~\cite{PT} and the references therein.

It is well known that $Z^1(G,\ell^2 G)= \hom_{\IZ G}(\omega_{\IZ}(G),\ell^2 G)$, where $\hom_{\IZ G}$ denotes the set of left-module homomorphisms. If $S \subseteq \omega_{\IZ}(G)$ is a set of generators of $\omega_{\IZ}(G)$ as a left-ideal in itself. It follows that $\hom_{\IZ G}(\omega_{\IZ}(G),\ell^2 G) \subseteq \ell^2 G^{\oplus S}$ by evaluation. Since $\dim_G \ell^2 G^{\oplus S} = |S|$, the claim follows.
\end{proof}

There are examples of finitely generated perfect groups with first $\ell^2$-Betti number as large as we wish; any free product of non-trivial perfect groups works. Thus, for any positive integer $k$, a finitely generated irng can be found which needs at least $k$ elements to generate it as a left ideal.

\subsection*{Acknowledgment}
This research was carried out while the authors were visiting at
the Institut Henri Poincar\'e (IHP) for the Program
``von Neumann algebras and ergodic theory of group actions.''
The authors gratefully acknowledge the kind hospitality and
stimulating environment provided by IHP and the program organizers.
The authors would like to thank Professor Agata Smoktunowicz and
Professor George Bergman for their help regarding the irng problem,
and Professor Martin Kassabov for improving the estimate in Theorem~\ref{thm:el}.


\begin{thebibliography}{9}
%
\bibitem{openproblems}
G. Baumslag, A.G. Myasnikov, V. Shpilrain;
Open problems in combinatorial group theory. Second edition.
\emph{Combinatorial and geometric group theory} (New York, 2000/Hoboken, NJ, 2001), 1--38,
Contemp. Math., 296, Amer. Math. Soc., Providence, RI, 2002.
%
\bibitem{kaplansky}
I. Kaplansky;
\emph{Commutative rings.}
Allyn and Bacon, Inc., Boston, Mass. 1970 x+180 pp.
%
\bibitem{Lennox-Wiegold}
J.~C. Lennox and J.~Wiegold;
\newblock Generators and killers for direct and free products.
\newblock {\em Arch. Math. (Basel)}, \textbf{34}(4) (1980) 296--300.
%
\bibitem{PT}
J. Peterson and A. Thom;
Group cocycles and the ring of affiliated operators.
\emph {Invent. math.}, {\bf 185}, Issue 3 (2011) 561--592.
%
%
\end{thebibliography}
\end{document}